\newtheorem{theorem}[subsubsection]{Theorem}
\newtheorem{lemma}[subsubsection]{Lemma}
\newtheorem{corollary}[subsubsection]{Corollary}
\newtheorem{conjecture}[subsubsection]{Conjecture}
\newtheorem{proposition}[subsubsection]{Proposition}
\theoremstyle{definition}
\newtheorem{definition}[subsubsection]{Definition}
\newtheorem{remark}[subsubsection]{Remark}
\newtheorem{notation}[subsubsection]{Notation}
\newtheorem{fact}[subsubsection]{Fact}
\numberwithin{equation}{subsubsection}
\def\CC{\mathbb{C}}
\def\ZZ{\mathbb{Z}}
\title{On the total positivity of contingency metamatrices}
\author{Zhentao Wang, Jiawen Xie and XUhang Zhang}
\begin{document}
\maketitle

\begin{abstract}
M. Kapranov and V. Schechtman introduced the contingency metamatrix for a finite Coxeter group and conjectured that the contingency metamatrix is totally positive. For the Coxeter groups of type $A$, this conjecture has been proved by P. Etingof. In this article, we prove this conjecture for the Coxeter groups of type $B$ and exceptional types.
\end{abstract}

\section{Introduction}

In \cite{ks}, M. Kapranov and V. Schechtman constructed a refinement of the natural stratification of $\CC^n/S_n$ according to the patterns of real parts and imaginary parts. 
The numbers of strata of different bi-degrees of this stratification form the so-called contingency metamatrix
\begin{equation*}
    M=(M_{pq})_{0\leq p,q\leq n-1},
\end{equation*} 
where the number $M_{pq}$ of strata of bi-degree $(p,q)$ is given by 
\begin{equation*}
    M_{pq}=\sum_{|\alpha|=p,|\beta|=q}|S_{\alpha}\backslash S_n/S_{\beta}|.
\end{equation*}
Here, $S_n=\langle s_1,\dots,s_{n-1}|s_i^2=(s_is_{i+1})^3=(s_is_j)^2=1, \forall |i-j|>1\rangle$ is the symmetric group and $S_\alpha$ and $S_\beta$ are parabolic subgroups of $S_n$, where $\alpha,\beta\subset \{s_1,\dots,s_{n-1}\}$ are of cardinalities $p$ and $q$, respectively.

\par A striking result, due to P. Etingof \cite[Appendix, Corollary A.13]{ks}, shows that the contingency metamatrix is totally positive. M. Kapranov and V. Schechtman conjectured that total positivity holds for any real reflection group $W$, not only for type $A$ (i.e., $W=S_n$):
\begin{conjecture}[M. Kapranov, V.~Schechtman]\label{conjecture}
   Let $(W,S)$ be a finite Coxeter group of rank $|S|=n$. Then the metamatrix $M(W)=(M_{pq})_{0\leq p,q\leq n}$, $$M_{pq}=\sum_{I,J\subset S,|I|=p,|J|=q}|W_{I}\backslash W/W_{J}|$$ is totally positive, where $W_I$ and $W_J$ are parabolic subgroups of $W$ of type $I$ and $J$, respectively.
\end{conjecture}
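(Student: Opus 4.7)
The plan is to prove the conjecture separately for Coxeter groups of type $B$ and for the exceptional types, parallel to Etingof's treatment of type $A$. The shared feature of both cases is that one bypasses the original complex-analytic interpretation and reduces the problem to a concrete counting/linear-algebra question about the single finite matrix $M(W)$.

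For type $B_n$, I would mimic Etingof's type $A$ proof. The Weyl group $W(B_n)$ is the group of signed permutations, and its standard parabolic subgroups decompose as products $W(B_k) \times S_{k_1} \times \cdots \times S_{k_r}$. Accordingly I would parameterize the double cosets in $W_I \backslash W / W_J$ by \emph{signed contingency matrices}: arrays whose entries split into $\pm$-parts and whose row and column sums match the compositions dictated by $I$ and $J$, with a distinguished row/column encoding the $B_k$-factor. After establishing this bijection, the next step is to compute $|W_I \backslash W / W_J|$ in closed form by first counting a more flexible family of \emph{generalized signed contingency matrices} (in which some of the row/column constraints are relaxed) and then passing to the rigid family via an inclusion-exclusion identity. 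With these explicit formulas, the task reduces to checking the total positivity of the resulting numerical matrix $M(B_n)$. I would attempt to produce an explicit factorization $M(B_n) = L D L^{\top}$ or a product decomposition into totally nonnegative building blocks, in the spirit of Etingof's proof in type $A$, so that total positivity descends from that of the factors.

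For the exceptional types $H_3, H_4, F_4, E_6, E_7, E_8$, the rank satisfies $n \le 8$, so $M(W)$ is a matrix of size at most $9 \times 9$ whose entries are explicit nonnegative integers. Thus I would proceed computationally: enumerate the $2^n$ standard parabolic subgroups $W_I$, compute the cardinalities $|W_I \backslash W / W_J|$ using the distinguished minimal-length double coset representative algorithm (as implemented in \textsf{GAP}/\textsf{CHEVIE} or \textsf{SageMath}), assemble $M(W)$ for each exceptional $W$, and then verify total positivity by direct enumeration of all minors. Since the number of minors is finite and modest, this yields a rigorous case-by-case verification, which I would present as a table or a short certificate.

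The main obstacle lies in the type $B$ case: converting the counting formulas for $M(B_n)$ into a form that manifestly exhibits total positivity. Even with explicit entries in hand, total positivity is not automatic, and Etingof's argument for $S_n$ relies on a nontrivial identity that must be carefully adapted to accommodate the sign structure of $B_n$ and the mixing between the single $B$-block and the surrounding $A$-blocks of a parabolic. A fallback, should a clean factorization prove elusive, is to establish the LU-type decomposition of $M(B_n)$ inductively on $n$ using the closed-form entries and verify that all pivots and multipliers are nonnegative; this would still suffice to conclude total positivity and complete the proof of the conjecture for type $B$.
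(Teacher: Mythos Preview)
Your plan is essentially the paper's own strategy: the bijection with signed contingency matrices, the passage through generalized signed contingency matrices via inclusion--exclusion, an explicit $LDL^{\top}$-type factorization of $M(B_n)$, and a computer verification for the exceptional types are exactly what the paper does. The one ingredient you will need and do not yet name is the closed-form identity $|\mathrm{GSCM}_n(p,q)|=\frac{1}{n!}\prod_{i=1}^n(2pq+p+q+i)$, which is a polynomial in $(p+\tfrac12)(q+\tfrac12)$ and hence forces a Vandermonde decomposition $L=VD\,{}^tV$ with nodes $i+\tfrac12$; combined with the binomial relation $L=PM\,{}^tP$ this yields the Gauss decomposition $M=QD\,{}^tQ$ with $Q=P^{-1}V$ upper triangular, and Whitney's theorem finishes.
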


Via private communication with M.~Finkelberg, we are informed that I. Ukraintsev proved the conjecture for the Coxeter groups whose ranks are no more than $3$.

In this article, we proved the conjecture for type $B$ and exceptional types. Namely,
\begin{theorem}\label{main theorem}
    Let $(W,S)$ be of type $B_n$ or of exceptional type, then the metamatrix $M(W)$ is totally positive.
\end{theorem}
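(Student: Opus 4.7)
The strategy is to handle type $B_n$ and the exceptional types by rather different methods: the former via a combinatorial argument modelled on P.~Etingof's treatment of type $A$, the latter by a direct case-by-case verification on the finite list $H_3$, $H_4$, $F_4$, $E_6$, $E_7$, $E_8$.

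For type $B_n$ the underlying Weyl group is the hyperoctahedral group $(\ZZ/2\ZZ)^n \rtimes S_n$ of signed permutations, and its simple reflections split into one ``special'' reflection (the type-$B$ generator) and $n-1$ transpositions. Consequently each subset $I\subseteq S$ determines a parabolic subgroup of the form $S_{\alpha_1}\times\cdots\times S_{\alpha_k}\times W_{\alpha_0}$ for some composition of $n$, where $W_{\alpha_0}$ is itself a smaller hyperoctahedral factor. The first step is to introduce a notion of \emph{signed contingency matrix}: an ordinary contingency matrix recording how the entries of a signed permutation distribute among the row-blocks of $W_I$ and the column-blocks of $W_J$, enriched with $\pm$-decorations on those entries that lie in the hyperoctahedral blocks. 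I would then establish a bijection between the double cosets $W_I\backslash W/W_J$ and signed contingency matrices of the corresponding bi-shape, mirroring the classical bijection used by Etingof in type $A$. Summing over all shapes with $|I|=p$ and $|J|=q$ then yields an explicit combinatorial formula for $M_{pq}$.

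The next step is to pass from signed contingency matrices to \emph{generalized} signed contingency matrices, in which the nonnegativity of the entries is relaxed, and to recover the actual count by inclusion--exclusion. This should produce a closed-form expression for $M_{pq}$ that can then be manipulated. The key remaining task is to use this closed form to exhibit a factorization of the matrix $M(B_n)$ into manifestly totally nonnegative pieces---for instance, a factorization into elementary bidiagonal matrices with nonnegative entries (equivalently an $\mathrm{LDU}$ decomposition with nonnegative factors), or a Lindström--Gessel--Viennot realization via nonintersecting lattice paths. I expect this last step to be the principal obstacle, because the $\pm$-decorations introduce alternating contributions that must nevertheless recombine to give a totally nonnegative matrix; controlling the resulting cancellations is where having the closed-form expression from the previous step will pay off.

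For each exceptional type $W \in \{H_3, H_4, F_4, E_6, E_7, E_8\}$, the metamatrix $M(W)$ is a square matrix of size at most $(n+1)\times(n+1)\le 9\times 9$. Since the double-coset counts $|W_I\backslash W/W_J|$ for parabolic subgroups of the exceptional Coxeter groups are accessible (via the explicit Bruhat decomposition, Kilmoyer-type coset-counting formulae, or standard computer-algebra implementations of Coxeter group theory), one can tabulate $M(W)$ in each case and then verify total positivity directly, either by computing all minors or by producing an explicit nonnegative $\mathrm{LDU}$ decomposition. The computation is finite and mechanical, and combining these case-by-case verifications with the type-$B_n$ argument proves Theorem~\ref{main theorem}.
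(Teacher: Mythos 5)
Your overall blueprint matches the paper's: for type $B_n$, biject $W_I\backslash W/W_J$ with signed contingency matrices, pass to a generalized version to get a closed-form count, and then factor $M(B_n)$ as a product of manifestly totally nonnegative pieces; for the exceptional types, tabulate $M(W)$ by computer and check minors. The exceptional-type half of your proposal is exactly what the paper does, and is fine as stated.

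For $B_n$, however, you correctly flag the factorization as ``the principal obstacle'' and then leave it unresolved, which is precisely where the paper's actual content lies. The mechanism is the following. After introducing generalized signed contingency matrices, one proves the closed form
\[
|\mathrm{GSCM}_n(p,q)| \;=\; \sum_{a=0}^n \binom{a+(p+1)(q+1)-1}{a}\binom{n-a+pq-1}{n-a} \;=\; \frac{1}{n!}\prod_{i=1}^n\bigl(2pq+p+q+i\bigr),
\]
and the right-hand side is a polynomial in $(p+\tfrac12)(q+\tfrac12)$. Hence $L:=(|\mathrm{GSCM}_n(p,q)|)_{0\le p,q\le n}$ factors as $L = V D\,{}^tV$ with $V=\bigl((i+\tfrac12)^j\bigr)$ a Vandermonde matrix (totally positive, since the nodes $i+\tfrac12$ are distinct and positive) and $D$ diagonal with positive entries. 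On the other hand, the GSCM/SCM relation is a binomial transform that says $L = P M\,{}^tP$ with $P=\bigl(\binom{i}{j}\bigr)$ the lower-triangular Pascal matrix; therefore $M = Q D\,{}^tQ$ where $Q = P^{-1}V$, and $Q$ is shown to be upper triangular using a Stirling-number vanishing identity. Now $V = PQ$ is exactly the (opposite) Gauss decomposition of the totally positive matrix $V$, so by Whitney/Loewner the triangular factors $P$ and $Q$ are totally nonnegative, and $M = Q D\,{}^tQ$ inherits total positivity. Your sketch neither produces the crucial product formula $\frac{1}{n!}\prod_{i=1}^n(2pq+p+q+i)$ nor the observation that it is a function of $(p+\tfrac12)(q+\tfrac12)$ alone; without those, the hoped-for nonnegative $\mathrm{LDU}$ or LGV realization remains speculative. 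So the proposal is a correct roadmap but not a proof: the type-$B_n$ argument still needs the Vandermonde factorization that makes Whitney's theorem applicable.
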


Roughly speaking, we follow the method of P.~Etingof for type $A$ to give a proof for type $B$. We first show that there is a bijection between the set of double parabolic cosets and the set of signed contingency matrices. Then we compute the number of generalized signed contingency matrices, which helps us to compute the number of signed contingency matrices. Finally, we show the total positivity of the metamatrices via Whitney's theorem\cite{c}. For exceptional types, we verify the conjecture using computational methods.

%\section{Statement}

%Let $(W,S)$ be a Coxeter system, where $S$ is the set of simple reflections. Let $n=|S|$. 
%For $I\subset S$, we denote by $W_I$ the parabolic Coxeter subgroup generated by the simple reflections in $I$. 
%\begin{definition}[contingency metamatrix]
  %  For $0\leq p,q\leq n$, denote
 %   \begin{equation*}
  %      M_{pq}:=\sum_{I,J\subset S,|I|=p,|J|=q} |W_I\backslash W/W_J|.
  %  \end{equation*}
  %  We call the matrix $M=M(W,S):=(M_{pq})_{0\leq p,q\leq n}$ the contingency metamatrix associated with the Coxeter system $(W,S)$.
%\end{definition}

%\begin{theorem}\label{main theorem}
%    Let $(W,S)$ be of type $B_n$ or of exceptional type, then the metamatrix $M(W)$ is totally positive.
%\end{theorem}

\section{Proof For Type  \texorpdfstring{$B$}{B}}\label{B}

Let $(W,S)$ be the Coxeter group of type $B_n$, where $S:=\{s_1,s_2,\dots, s_n\}$ is the set of simple reflections. We have the following relations:
\begin{equation*}
 \begin{aligned}
    (s_{i}s_{i+1})^{3}=&1,\ \mathrm{if} \ 0\leq i<n-2, \\
    (s_{n-1}s_{n})^{4}=&1,\\
    (s_{i}s_{j})^{2}=&1,\ \mathrm{if}\ |i-j|>1.\\
 \end{aligned}   
\end{equation*}

\subsection{Double cosets by parabolic subgroups}

\par Let $E_{ij}$ be the matrix with the $(i,j)$-entry being $1$ and the other entries being $0$. 
\par We have a useful combinatorial description of $W$:

\begin{lemma}\cite[Section 8.1]{bb}
    There is a faithful representation of $W$ on $\mathbb{R}^n$
    \begin{equation*}
        \phi\colon W\hookrightarrow \mathrm{GL}_{n}(\mathbb{R}),
    \end{equation*}
    defined by
    \begin{equation*}
        \begin{aligned}
            \phi(s_{i})=& I_n-E_{ii}-E_{i+1,i+1}+E_{i,i+1}+E_{i+1,i} ,\ 1\le i<n,\\
            \phi(s_{n})=& I_n-2E_{nn}.
        \end{aligned}
    \end{equation*}
\end{lemma}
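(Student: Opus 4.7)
The plan is to establish the lemma in two stages: first I would verify that the assignment $\phi$ respects the Coxeter relations and therefore extends to a well-defined group homomorphism $W\to\GL_n(\RR)$, and then I would prove faithfulness by a cardinality argument, identifying $\phi(W)$ with the full hyperoctahedral group of signed permutation matrices.

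For the first stage, I would observe that the matrices $\phi(s_1),\ldots,\phi(s_{n-1})$ are precisely the permutation matrices of the adjacent transpositions in $S_n$ acting on $\RR^n$ by permuting coordinates; consequently the relations $\phi(s_i)^2=I_n$, $(\phi(s_i)\phi(s_{i+1}))^3=I_n$ for $i<n-2$, and $(\phi(s_i)\phi(s_j))^2=I_n$ for $1\le i<j-1<n-1$ are classical. What remains is to check the three relations involving $\phi(s_n)=I_n-2E_{nn}$: namely $\phi(s_n)^2=I_n$, which is immediate; the commutation $(\phi(s_i)\phi(s_n))^2=I_n$ for $i<n-1$, which is clear from the block structure since $\phi(s_i)$ acts trivially on the last coordinate; and the braid relation $(\phi(s_{n-1})\phi(s_n))^4=I_n$, which reduces to a direct $2\times 2$ computation on the $(n-1,n)$-plane.

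For the second stage, I would argue that $\phi(W)$ contains the symmetric-group summand (the permutation matrices generated by $\phi(s_1),\ldots,\phi(s_{n-1})$) together with the single sign change $\phi(s_n)$. Conjugating $\phi(s_n)$ by an appropriate permutation matrix produces $I_n-2E_{ii}$ for every $i$, so $\phi(W)$ contains the elementary abelian $2$-group of diagonal sign changes as well. Consequently $\phi(W)$ is the full hyperoctahedral group, of order $2^n\cdot n!$. Since $|W(B_n)|=2^n\cdot n!$, the surjection $W\twoheadrightarrow\phi(W)$ is forced to be injective.

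The main obstacle is essentially notational bookkeeping rather than any conceptual difficulty; the only computation that is not immediate is verifying that the $2\times 2$ block of $\phi(s_{n-1})\phi(s_n)$ on the last two coordinates has order exactly four, and this is a direct matrix check. An alternative would be to cite the standard identification of $W(B_n)$ with the signed permutation group and exhibit $\phi$ as this identification on generators, bypassing the cardinality argument entirely.
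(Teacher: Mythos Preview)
The paper does not give its own proof of this lemma; it simply records the statement with a citation to \cite[Section~8.1]{bb} and proceeds to use it. Your two-stage argument is correct and complete as a standalone proof.

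One small caution worth noting: in the faithfulness stage you invoke $|W(B_n)|=2^n\,n!$, but in most standard references (including \cite{bb}) this cardinality is established precisely by identifying the abstract Coxeter group with the hyperoctahedral group---which is exactly the content of the lemma. To avoid any circularity you should either appeal to an independent source for the order (for instance, Tits' theorem that the geometric representation of any Coxeter group is faithful, noting that $\phi$ \emph{is} the geometric representation of $B_n$ on its root space), or else reframe the argument in the direction you suggest at the end: start from the signed permutation group, verify that the given generators satisfy the $B_n$ Coxeter relations, and then show that no further relations hold by exhibiting $2^n\,n!$ distinct elements. The latter is effectively the approach of \cite{bb} being cited.
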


In the following, we identify $W$ with $\phi(W)$. 

\begin{remark}
    By this lemma, we can give a concrete description (see Proposition \ref{3.1.8}) of the parabolic double coset $W_{I}\backslash W/W_{J}$, where $I,J\subset S$.
    Although the proof is a little technical, the underlying idea is very simple and direct. Before going into the rigorous description, we first give an informal account. For $0< i< n$, the left (resp. right) action of $s_{i}$ on $W\subset \mathrm{GL}_{n}(\mathbb{R})$ changes the $i$-th row (resp. column) and $(i+1)$-st row (resp. column) of the matrices. So we can identify these two rows (resp. columns) if we take the quotient by $s_{i}$ from the left (resp. right). Similarly, the left (resp. right) action of $s_n$ changes the sign of elements in the last row (resp. column). Hence, once we take the quotient by $s_n$ from the left (resp. right), we can ignore the sign of elements in the last row (resp. column). Using this observation, we can obtain a useful description for the set $W_{I}\backslash W/W_{J}$.    
\end{remark}

\begin{notation}
    The set of signed numbers is
    \begin{equation*}
        \mathbb{S}:=\{(a^+,a^-):a^+,a^- \in \ZZ_{\ge0}\}.
    \end{equation*}
    Let $a=(a^+,a^-)\in \mathbb{S}$, we denote by $|a|:=a^+ + a^-$ the absolute value of $a$.
    % and let $a'=(a^-,a^+)$ be the transpose of $a$.
\end{notation}

\begin{definition}[margin conditions] 
    A margin condition of $n$ of length $p$ is a pair $\alpha = (\tilde{\alpha},\lambda_\alpha)$, where $(\tilde{\alpha},\lambda_\alpha)$ is
    \begin{itemize}
        \item either $ \tilde{\alpha} $ is an ordered partition of $n$ of length $p$, $\lambda_\alpha = 0$; 
        \item or $ \tilde{\alpha} $ is an ordered partition of $n$ of length $p+1$, $\lambda_\alpha = 1$.
    \end{itemize}  
    We denote the length of $\alpha$ by $|\alpha|$.   
\end{definition}

\begin{proposition}
    There is a natural bijection 
    \begin{equation*}
        \{\text{subset $I$ of $S$, $|I| = p$}\} \xrightarrow{\sim} \{\text{margin condition $\alpha$ of $n$ of length $n-p$ }\}.
    \end{equation*}
\end{proposition}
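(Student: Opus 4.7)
The plan is to exploit the special role of the reflection $s_n$ in the Dynkin diagram of type $B_n$ and reduce to the standard type $A$ bijection. The parameter $\lambda_\alpha$ is there precisely to record whether $s_n$ belongs to $I$ or not.

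First I would separate the set of subsets $I \subset S$ of size $p$ into two cases according to whether $s_n \in I$. When $s_n \notin I$, the subset $I$ lies inside $\{s_1,\dots,s_{n-1}\}$, which generates a Coxeter system of type $A_{n-1}$; when $s_n \in I$, we write $I = I' \sqcup \{s_n\}$ with $I' \subset \{s_1,\dots,s_{n-1}\}$ of size $p-1$. In each case I can therefore apply the well-known bijection for type $A$: subsets of $\{s_1,\dots,s_{n-1}\}$ of size $k$ are in bijection with ordered partitions (compositions) of $n$ of length $n-k$, where the composition is read off from the ``gaps'' in the complement, namely if $\{s_1,\dots,s_{n-1}\}\setminus J = \{s_{j_1},\dots,s_{j_r}\}$ with $j_1<\cdots<j_r$, then the composition is $(j_1, j_2 - j_1,\dots, n - j_r)$.

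Applying this recipe gives: when $s_n \notin I$, an ordered partition $\tilde{\alpha}$ of $n$ of length $n-p$, which I associate to the margin condition $\alpha = (\tilde{\alpha},0)$; when $s_n \in I$, an ordered partition $\tilde{\alpha}$ of $n$ of length $n - (p-1) = n-p+1$, which I associate to the margin condition $\alpha = (\tilde{\alpha},1)$. In both cases the length $|\alpha| = n-p$ matches the required length on the target side, and the two cases are disjoint because they are distinguished by $\lambda_\alpha$. Conversely, from any margin condition of length $n-p$ we recover $I$ by first reading $s_n \in I$ iff $\lambda_\alpha = 1$, then inverting the type $A$ bijection on the remaining data.

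This whole argument is essentially bookkeeping, so there is no serious obstacle; the only thing to verify is that the cardinalities match. In case $\lambda_\alpha = 0$, $\tilde{\alpha}$ has length $n-p$, corresponding to $|I \cap \{s_1,\dots,s_{n-1}\}| = p$ and $s_n \notin I$, so $|I| = p$. In case $\lambda_\alpha = 1$, $\tilde{\alpha}$ has length $n-p+1$, corresponding to $|I \cap \{s_1,\dots,s_{n-1}\}| = p-1$ and $s_n \in I$, so again $|I| = p$. The naturality statement then amounts to the observation that the construction uses only the combinatorial data of the Coxeter diagram and the distinguished node $s_n$.
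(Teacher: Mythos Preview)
Your proposal is correct and is essentially the same argument as the paper's: both record $\lambda_\alpha = |I\cap\{s_n\}|$ and then use the standard type $A$ bijection on $I\cap\{s_1,\dots,s_{n-1}\}$, which the paper phrases as ``$i$ and $i+1$ lie in the same part of $\tilde\alpha$ iff $s_i\in I$'' and you phrase equivalently via gaps of the complement. The length-check you spell out is exactly what is needed to see that the image is a margin condition of length $n-p$.
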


\begin{proof}
    For $I\subset S$, let $\tilde{\alpha}$ be the unique partition of $n$, such that $i$ and $i+1$ lie in the same part if and only if $s_i\in I$, for $1\leq i\leq n-1$; let $\lambda_\alpha= |I\cap {s_n}|$. Then the bijection sends $I$ to $(\tilde{\alpha},\lambda_\alpha)$. 
    \par On the other hand, for $(\tilde{\alpha},\lambda_\alpha)$ a margin condition of $n$, then $s_i$ lies in the corresponding subset $I$ if and only if $i$ and $i+1$ lie in the same part of $\tilde{\alpha}$, for $1\leq i\leq n-1$; and $s_n\in I$ if and only if $\lambda_\alpha=1$.

    \iffalse
    We start with the ordered set $\{1,\dots,n\}$. For $1\leq i\leq n-1$, let $s_i$ correspond to relation $i\sim i+1$. Then for $I\subset S$, the corresponding margin condition is 
    \begin{equation*}
        \alpha_I=(\{1\dots n\}/(I\backslash \{s_n\}),|I\cap \{s_n\}|).
    \end{equation*}
    \fi
\end{proof}

\begin{definition}[signed contingency matrix]\label{signed}
    Let $\alpha = (\tilde{\alpha},\lambda_\alpha),\beta = (\tilde{\beta},\lambda_\beta)$ be two margin conditions of $n$ of length $p,q$, respectively. The set of signed contingency matrices with margins $(\alpha,\beta)$ is
    \begin{itemize}
        \item[(1)] $\lambda_\alpha=0$, $\lambda_\beta=0$ : 
            \begin{equation*}
                \mathrm{SCM}_n(\alpha,\beta):=
                \{A\in \mathrm{Mat}_{p,q}(\mathbb{S}): \sum_{j} |a_{ij}|=\alpha_{i}, \sum_{i}|a_{ij}|=\beta_{j}\}.
            \end{equation*}
        \item[(2)] $\lambda_\alpha=1$, $\lambda_\beta=0$ : 
            \begin{equation*}
                \mathrm{SCM}_n(\alpha,\beta):=
                \{A\in \mathrm{Mat}_{p+1,q}(\mathbb{S}): \sum_{j} |a_{ij}|=\alpha_{i},\sum_{i}|a_{ij}|=\beta_{j},a^-_{p+1,j}=0\}.
            \end{equation*}
        \item[(3)] $\lambda_\alpha=0$, $\lambda_\beta=1$ : 
            \begin{equation*}
                \mathrm{SCM}_n(\alpha,\beta):=
                \{A\in \mathrm{Mat}_{p,q+1}(\mathbb{S}): \sum_{j} |a_{ij}|=\alpha_{i}, \sum_{i}|a_{ij}|=\beta_{j},a^-_{i,q+1}=0\}.
            \end{equation*}
        \item[(4)] $\lambda_\alpha=1$, $\lambda_\beta=1$ : 
            \begin{equation*}
                \mathrm{SCM}_n(\alpha,\beta):=
                \{A\in \mathrm{Mat}_{p+1,q+1}(\mathbb{S}): \sum_{j} |a_{ij}|=\alpha_{i},\sum_{i}|a_{ij}|=\beta_{j},a^-_{p+1,j}= a^-_{i,q+1}=0\}.
            \end{equation*}
    \end{itemize}
    
    For given lengths $p$ and $q$, let us denote
    \begin{equation*}
        \mathrm{SCM}_{n}(p,q):=\bigcup_{|\alpha|=p,|\beta|=q} \mathrm{SCM}_{n}(\alpha,\beta).
    \end{equation*}
\end{definition}

\begin{definition}
    \begin{enumerate}
        \item Fix a margin condition $\alpha$, a colored signed ordered partition of $n$ of type $\alpha$ is an ordered set $v=(v_1,\cdots,v_{|\tilde{\alpha}|})$ of sets, where $v_i$ is a subset of $\{1,\dots,n\}\times\{\pm\}$ of cardinality $\tilde{\alpha}_i$, such that under the projection 
        \begin{equation*}
            p:\{1,\dots,n\}\times\{\pm\}\rightarrow \{1,\dots,n\},\quad (a,sign)\mapsto a,
        \end{equation*}
        $\{p(v_i)\}_{1\leq i\leq |\tilde{\alpha}|}$ forms a disjoint decomposition of $\{1,\dots,n\}$.
        \item Fix margin conditions $\alpha$ and $\beta$ , a colored double signed contingency matrix of $n$ of margin $(\alpha,\beta)$ is a matrix $A=(A_{ij})_{1\leq i\leq |\tilde{\alpha}|,1\leq j\leq |\tilde{\beta}|}$, where $A_{ij}$ is a subset of $\{1,\dots,n\}\times\{\pm\}\times \{\pm\}$, such that under the projections 
        \begin{eqnarray*} 
            p_1:\{1,\dots,n\}\times\{\pm\}\times \{\pm\}\rightarrow \{1,\dots,n\}\times\{\pm\},\quad (a,sign_1,sign_2)\mapsto (a,sign_1),\\
            p_2:\{1,\dots,n\}\times\{\pm\}\times \{\pm\}\rightarrow \{1,\dots,n\}\times\{\pm\},\quad (a,sign_1,sign_2)\mapsto (a,sign_2),\\
        \end{eqnarray*}
        $(\cup_jp_1(A_{1j}),\dots,\cup_jp_1(A_{|\tilde{\alpha}|j}))$ forms a colored signed ordered partition of $n$ of type $\alpha$ and $(\cup_ip_2(A_{i1}),\dots,\cup_ip_2(A_{i|\tilde{\beta}|})$ forms a colored signed ordered partition of $n$ of type $\beta$.
    \end{enumerate}
\end{definition}

\begin{proposition}\label{3.1.8}
    Fix two subsets $I$ and $J$ of $S$, there is a natural bijection 
    \begin{equation*}
        W_I\backslash W/ W_J \xlongrightarrow{\sim} \mathrm{SCM}_{n}(\alpha_I,\beta_J).
    \end{equation*}
\end{proposition}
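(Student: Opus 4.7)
The plan is to construct a natural candidate map $\Psi\colon W\to\mathrm{SCM}_n(\alpha_I,\beta_J)$ by block-wise signed counting, show that it descends to the double-coset quotient $W_I\backslash W/W_J$, and then verify that the induced map is a bijection. Using the realization of $W\subset\GL_n(\RR)$ as signed permutation matrices from the preceding lemma, for $M\in W$ and indices $(i,j)$ corresponding to the parts $\tilde{\alpha}_i$ and $\tilde{\beta}_j$ of $\alpha_I=(\tilde{\alpha},\lambda_{\alpha_I})$ and $\beta_J=(\tilde{\beta},\lambda_{\beta_J})$, I record the pair $(a_{ij}^+,a_{ij}^-)$ counting the $+1$ and $-1$ entries of $M$ in the submatrix on rows $\tilde{\alpha}_i$ and columns $\tilde{\beta}_j$; in the last row-block when $\lambda_{\alpha_I}=1$ I instead record $(a_{ij}^++a_{ij}^-,0)$, and similarly in the last column-block when $\lambda_{\beta_J}=1$. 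The resulting matrix lies in $\mathrm{SCM}_n(\alpha_I,\beta_J)$ because $M$ has exactly one nonzero entry per row and column, which yields the margin identities $\sum_j|a_{ij}|=\tilde{\alpha}_i$ and $\sum_i|a_{ij}|=\tilde{\beta}_j$.

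To see $\Psi$ factors through $W_I\backslash W/W_J$, I read off from the explicit formulas for $\phi(s_i)$ that $W_I$ acts on the left by permuting rows within each part of $\tilde{\alpha}$ and, when $\lambda_{\alpha_I}=1$, by independent sign-flips on each row of the last part, since $s_n$ negates row $n$ and conjugates via the intra-part permutations already in $W_I$ to a negation of any row of the last part; the corresponding statement for $W_J$ acting on the right is dual. Intra-block row and column permutations preserve each pair $(a_{ij}^+,a_{ij}^-)$, while sign flips in the last row/column blocks preserve only the total count $a_{ij}^++a_{ij}^-$, which is exactly the data retained there by $\mathrm{SCM}_n$. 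Surjectivity of the induced map $\overline{\Psi}$ is then a realization argument: the margin identities let me build a bijection $\sigma\colon\{1,\dots,n\}\to\{1,\dots,n\}$ with $|\sigma^{-1}(\tilde{\beta}_j)\cap\tilde{\alpha}_i|=|a_{ij}|$, and I assign signs to realize the prescribed $a_{ij}^\pm$ (taking $+1$ throughout the last blocks, where the SCM definition forces $a_{ij}^-=0$).

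For injectivity, I take $M,M'$ with $\overline{\Psi}([M])=\overline{\Psi}([M'])=A$ and first use the sign-flip parts of $W_I$ and $W_J$ to normalize: flip the rows of the last row-block to make every nonzero entry there positive, and then, in case~(4), flip only those columns in the last column-block whose unique nonzero entry lies outside the last row-block, so that all entries in the last column-block also become positive without disturbing the already-positive intersection. After normalization, the multiset of labels $(\text{column-block containing }\sigma(r),\epsilon(r))$ over $r\in\tilde{\alpha}_i$ is the same for $M$ and $M'$, so a permutation $\pi_i\in S_{\tilde{\alpha}_i}\subset W_I$ aligns the labels row by row, and a subsequent column permutation $\tau_j\in S_{\tilde{\beta}_j}\subset W_J$ aligns the specific column assignments within each $\tilde{\beta}_j$. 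Since intra-block row/column permutations preserve the sign attached to each nonzero entry, this produces $L\in W_I$ and $R\in W_J$ with $LMR=M'$.

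The step I expect to be most delicate is the simultaneous sign-normalization in case~(4), where the $W_I$- and $W_J$-sign flips both act on the intersection of the last row-block and last column-block and a naive protocol could flip those intersection entries twice or at cross purposes. The observation that unlocks it is that $M$ is a signed permutation matrix, so each column in the last column-block has a unique nonzero entry that either sits at the intersection (already positive after the row step, hence left alone) or outside it (freely flippable by a column negation that does not touch the intersection). Once this bookkeeping is settled, cases~(1)--(3) reduce to simpler specializations of the same template with fewer sign-flip operations to track, and case~(1) recovers precisely the type~$A$ counting of Etingof.
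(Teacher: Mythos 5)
Your proof is correct and it takes a genuinely different route from the paper's. The paper first rewrites $W_I\backslash W/W_J$ as the orbit set $W\backslash(W/W_I\times W/W_J)$ under the diagonal action, interprets $W/W_I$ as the set of colored signed ordered partitions (a fully labeled object), identifies $W/W_I\times W/W_J$ with the labeled set $C_n(\alpha_I,\beta_J)$ of colored double signed contingency matrices, and then concludes by asserting $\mathrm{SCM}_n(\alpha_I,\beta_J)=W\backslash C_n(\alpha_I,\beta_J)$ --- this last identification is stated without further argument. You instead construct the block-counting map $\Psi\colon W\to\mathrm{SCM}_n(\alpha_I,\beta_J)$ directly on signed permutation matrices, show it descends to double cosets (intra-block row/column permutations preserve each $(a^+_{ij},a^-_{ij})$, while the $s_n$-generated sign flips in the last row/column blocks preserve exactly the absolute value $|a_{ij}|$ that $\mathrm{SCM}$ retains there), and then prove surjectivity by explicit construction and injectivity by first normalizing all entries in the terminal blocks to $+1$ and then producing aligning intra-block permutations $\pi\in W_I$, $\tau\in W_J$. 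Your treatment of the sign-normalization in case~(4) --- flipping rows in the last row-block first, then flipping only those columns of the last column-block whose nonzero entry sits outside that row-block --- is the right bookkeeping and avoids the potential double-flipping at the intersection. In effect, your argument unwinds the paper's final orbit-space identification and replaces it with a concrete well-definedness/surjectivity/injectivity check; it is somewhat longer, but more self-contained, since the paper leaves $\mathrm{SCM}_n=W\backslash C_n$ as an exercise.
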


\begin{proof}
    Let $W$ act diagonally on $W/W_I\times W/W_J$ by $w'\cdot(w_1W_I,w_2W_J)=(ww_1W_I,ww_2W_J)$.\\
    There is a bijection of sets 
    \begin{equation*}
        W\backslash (W/W_I\times W/W_J)\longrightarrow W_I\backslash W / W_J \quad W(w_1W_I,w_2W_J) \mapsto W_Iw_1^{-1}w_2W_J.
    \end{equation*}
    By the matrix interpretation of $W$, $W/W_I$ can be interpreted as the set of colored signed ordered partitions of $n$ of type $\alpha_I$.
    Under this interpretation, $W/W_I\times W/W_J$ has a canonical bijection to the set $C_n(\alpha_I,\beta_J)$ of colored double signed contingency matrix of $n$ of margin $(\alpha_I,\beta_J)$:
    \begin{equation*}
        C_n(\alpha_I,\beta_J) \longrightarrow W/W_I\times W/W_J,\quad A\mapsto ((\cup_jp_1(A_{1j}),\dots),(\cup_ip_2(A_{i1}),\dots)).
    \end{equation*}
    Therefore we have 
    \begin{equation*}
        \mathrm{SCM}_n(\alpha_I,\beta_J) = W \backslash C_n(\alpha_I,\beta_J) = W \backslash  (W/W_I\times W/W_J) = W_I\backslash W/ W_J.
    \end{equation*}
    
\end{proof}

\begin{remark}
    Via private communication with M.~Finkelberg, we are informed that I.~Ukraintsev also independently found the above description of the double cosets $W_I\backslash W/W_J$.
\end{remark}

By Proposition \ref{3.1.8}, we have another description of the entry of contingency metamatrix
\begin{equation*}
    M_{pq}= |\mathrm{SCM}_{n}(n-p,n-q)|.
\end{equation*}

\iffalse
\begin{theorem}$\mathrm{(Main}$ $\mathrm{Theorem)}$
The metamatrix $M$ is totally positive.
\end{theorem}
\fi

\subsection{Calculate the metamatrix}

% We use a similar method as in \cite{} to compute $|\mathrm{SCM}_n(p,q)|$: we first consider $\mathrm{generalized\  signed\  contingency\  matrices}$ which allow some rows or columns to be $0$, then use them to calculate $|\mathrm{SCM}_n(p,q)|$. Precisely, we need the following notions:

    Signed contingency matrices can also be characterized as matrices with every row and column being nonzero with some extra conditions. Removing the restriction that every row and column cannot be zero, we have the following definition.
\begin{definition}
    $\mathrm{(generalized\  signed\  contingency\  matrices)}$
    The set of generalized signed contingency matrices is defined as
    \begin{equation*}
        \mathrm{GSCM}_n(p,q):= \bigcup_{0\leq i,j\leq 1} \mathrm{GSCM}_n^{i,j}(p,q),
    \end{equation*}
    where 
    \begin{align*}
        \mathrm{GSCM}_n^{0,0}(p,q)&:=\{A\in \mathrm{Mat}_{p,q}(\mathbb{S}):\sum_{i,j}|a_{ij}|=n\};\\
        \mathrm{GSCM}_n^{1,0}(p,q)&:=\{A\in \mathrm{Mat}_{p+1,q}(\mathbb{S}):\sum_{i,j}|a_{ij}|=n,\; a_{p+1,j}^-=0,\; \sum_j a_{p+1,j}^+>0\};\\
        \mathrm{GSCM}_n^{0,1}(p,q)&:=\{A\in \mathrm{Mat}_{p,q+1}(\mathbb{S}):\sum_{i,j}|a_{ij}|=n,\; a_{i,p+1}^-=0,\; \sum_i a_{i,q+1}^+>0\};\\
        \mathrm{GSCM}_n^{1,1}(p,q)&:=\\\{A\in \mathrm{Mat}_{p+1,q+1}(\mathbb{S}):&\sum_{i,j}|a_{ij}|=n,\; a_{p+1,j}^-=a_{j,q+1}^-=0,\; \sum_j a_{p+1,j}^+>0,\;\sum_i a_{i,q+1}^+>0\}.
    \end{align*}
\end{definition}

There are several facts via combinatorial methods.
\begin{fact}
\begin{enumerate}
    \item The cardinalities of generalized signed contingency matrices $\mathrm{GSCM}_n^{\lambda,\mu}(p,q)$ ($\lambda,\mu\in \{0,1\}$) are 
        \begin{align*}
            |\mathrm{GSCM}_n^{0,0}(p,q)|=&\sum\limits_{a=0}^n\binom{a+pq-1}{a}\binom{n-a+pq-1}{n-a},\\
            |\mathrm{GSCM}_n^{1,0}(p,q)|=&\\ \sum\limits_{a=0}^n\binom{a+(p+1)q-1}{a}&\binom{n-a+pq-1}{n-a}-\sum\limits_{a=0}^n\binom{a+pq-1}{a}\binom{n-a+pq-1}{n-a},\\
            |\mathrm{GSCM}_n^{0,1}(p,q)|=&\\ \sum\limits_{a=0}^n\binom{a+(q+1)p-1}{a}&\binom{n-a+pq-1}{n-a}-\sum\limits_{a=0}^n\binom{a+pq-1}{a}\binom{n-a+pq-1}{n-a},\\
            |\mathrm{GSCM}_n^{1,1}(p,q)|=&\sum\limits_{a=0}^n\binom{a+(p+1)(q+1)-1}{a}\binom{n-a+pq-1}{n-a}\\
            &-\sum\limits_{a=0}^n\binom{a+p(q+1)-1}{a}\binom{n-a+pq-1}{n-a}\\
            &-\sum\limits_{a=0}^n\binom{a+(p+1)q-1}{a}\binom{n-a+pq-1}{n-a}\\
            &+\sum\limits_{a=0}^n\binom{a+pq-1}{a}\binom{n-a+pq-1}{n-a}.
        \end{align*}
        \item \begin{equation*}
            |\mathrm{GSCM}_n(p,q)|=\sum\limits_{a=0}^n\binom{a+(p+1)(q+1)-1}{a}\binom{n-a+pq-1}{n-a}.
        \end{equation*}
        \item The relation between $|\mathrm{SCM}_n(p,q)|$ and $|\mathrm{GSCM}_n(p,q)|$ is characterized by the following formula:
            \begin{equation*}
                |\mathrm{GSCM}_n^{\lambda,\mu}(p,q)|=\sum\limits_{0\le i\le p,0\le j\le q}\binom{p}{i}\binom{q}{j}|\mathrm{SCM}^{\lambda,\mu}_{n}(p,q)|,
            \end{equation*}
        where $0\leq \lambda,\mu \leq 1$. 
        \item 
            \begin{equation*}
                |\mathrm{GSCM}_n(p,q)|=\sum\limits_{0\le i\le p,0\le j\le q}\binom{p}{i}\binom{q}{j}|\mathrm{SCM}_{n}(p,q)|,
            \end{equation*}
\end{enumerate}
\end{fact}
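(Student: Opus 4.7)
The plan is to establish each of the four claims by elementary enumerative combinatorics: stars-and-bars produces the raw counts, inclusion-exclusion imposes the nonzero-margin conditions, and a direct ``delete the zero rows and columns'' bijection yields the binomial identity relating GSCM to SCM.

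For part (1) I would first count, for each $(\lambda,\mu)\in\{0,1\}^2$, the matrices of shape $(p+\lambda)\times(q+\mu)$ in $\mathbb{S}$ that satisfy the sign restrictions on the last row/column and whose entries' absolute values sum to $n$, but \emph{without} imposing that the extra row/column be nonzero. Writing $a:=\sum_{i,j}a^+_{ij}$ and observing that positives can occupy all $(p+\lambda)(q+\mu)$ cells while negatives are confined to the $pq$ ``mixed'' cells, stars-and-bars gives the raw count
\begin{equation*}
\sum_{a=0}^{n}\binom{a+(p+\lambda)(q+\mu)-1}{a}\binom{n-a+pq-1}{n-a}.
\end{equation*}
I would then impose the nonzero conditions by inclusion-exclusion over the $\lambda+\mu$ events ``the extra row is zero'' and ``the extra column is zero''; each such event simply reduces the number of positive-admissible cells in the obvious way. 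The four stated formulas drop out, with the $(1,1)$ case being the only one involving the full two-term inclusion-exclusion.

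For part (2) the cleanest route is not to add the four expressions from part (1) but to observe that $\mathrm{GSCM}_n(p,q)$ is naturally in bijection with the set of $(p+1)\times(q+1)$ matrices in $\mathbb{S}$ with $a^-_{p+1,j}=a^-_{i,q+1}=0$ and total absolute value $n$, where the extra row and column are now allowed to be zero: the four types $(\lambda,\mu)$ correspond exactly to whether the extra row and/or extra column happens to be zero. Stars-and-bars on this unified shape yields the claimed formula at once, and summing the four expressions from part (1) and telescoping provides a cross-check. For part (3) I would give an explicit bijection: starting from $A\in\mathrm{GSCM}_n^{\lambda,\mu}(p,q)$, let $i$ be the number of nonzero rows among the first $p$ rows and $j$ the number of nonzero columns among the first $q$ columns; deleting these zero rows and columns produces a matrix of the same sign-type $(\lambda,\mu)$ whose mixed rows and columns are all nonzero (the possible extra positive-only row/column is nonzero by hypothesis). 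This is precisely an element of $\mathrm{SCM}_n^{\lambda,\mu}(i,j)$, since the associated margin conditions $\tilde{\alpha},\tilde{\beta}$ are by definition ordered partitions into positive parts. Conversely, an element of $\mathrm{SCM}_n^{\lambda,\mu}(i,j)$ together with a choice of which $i$ of the $p$ mixed rows and which $j$ of the $q$ mixed columns survive (contributing the factor $\binom{p}{i}\binom{q}{j}$) reconstructs the GSCM. Part (4) then follows by summing part (3) over $(\lambda,\mu)\in\{0,1\}^2$ and using the disjoint decompositions of $\mathrm{GSCM}_n(p,q)$ and $\mathrm{SCM}_n(i,j)$ into the four types.

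The main obstacle is really just bookkeeping: in each of the four cases one must correctly identify the ``mixed'' cells (carrying both $a^+$ and $a^-$) versus the ``positive-only'' cells (with $a^-=0$), so that the exponents $(p+\lambda)(q+\mu)$ and $pq$ in the stars-and-bars counts and the signs in the inclusion-exclusion line up correctly. Once this is set up, all four statements are essentially immediate.
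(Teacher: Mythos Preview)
Your proposal is correct. The paper itself does not prove this statement: it is recorded as a ``Fact'' obtained ``via combinatorial methods'' and no argument is supplied, so there is nothing to compare against. Your stars-and-bars count (positives distributed over the $(p+\lambda)(q+\mu)$ cells, negatives over the $pq$ mixed cells), followed by inclusion--exclusion on the events ``extra row zero'' and ``extra column zero'', and the delete-zero-rows-and-columns bijection for parts (3) and (4), are exactly the intended elementary verifications. One small point worth making explicit in the bijection for part (3): deleting a zero row leaves every column sum unchanged (and symmetrically), so the surviving mixed rows and columns remain nonzero and the resulting matrix is genuinely in $\mathrm{SCM}_n^{\lambda,\mu}(i,j)$; you allude to this but it is the only place a reader might pause. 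Note also that the right-hand sides in parts (3) and (4) of the statement have a typographical slip---$\mathrm{SCM}_n^{\lambda,\mu}(p,q)$ should read $\mathrm{SCM}_n^{\lambda,\mu}(i,j)$---and your argument correctly proves the intended identity.
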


\subsection{Total positivity}

The key observation to prove the total positivity is the following proposition.

\begin{proposition}
    \begin{itemize}
        \item[(1)] The matrix $L=(L_{pq})_{0\leq p,q\leq n}:=(|\mathrm{GSCM}_n(p,q)|)_{0\leq p,q\leq n }$ can be decomposed as:
        \begin{equation*}
            L=V\cdot D\cdot{}^tV,
        \end{equation*}
         where 
         \begin{equation*}
            V=((i+\frac{1}{2})^{j})_{0\le i,j\le n}
         \end{equation*}
         is a Vandermonde matrix and 
         \begin{equation*}
            D=diag(d(n,0),d(n,1),\dots,d(n,n)) 
         \end{equation*}
         is a diagonal matrix with every $d(n,i)$ being positive.
        \item[(2)] $Q:=P^{-1}\cdot V$ is upper triangular, where $P=(\binom{i}{j})_{0\le i,j\le n}$.
    \end{itemize}
\end{proposition}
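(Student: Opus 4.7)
The plan is in two steps: first obtain a closed form for $L_{pq}$, then factor it through the substitution $u:=(p+\tfrac12)(q+\tfrac12)$. To get the closed form, I would read the double sum
\begin{equation*}
    L_{pq} = \sum_{a=0}^n \binom{a+(p+1)(q+1)-1}{a}\binom{n-a+pq-1}{n-a}
\end{equation*}
as a Cauchy convolution. Using the negative-binomial generating function $(1-t)^{-x}=\sum_{a\ge 0}\binom{x+a-1}{a}t^a$, it equals the coefficient of $t^n$ in $(1-t)^{-(p+1)(q+1)}\cdot(1-t)^{-pq}=(1-t)^{-(2pq+p+q+1)}$. This collapses the sum into the compact identity $L_{pq}=\binom{n+2pq+p+q}{n}$.

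Next, the key algebraic observation is that $2pq+p+q = 2u-\tfrac12$, so that
\begin{equation*}
    L_{pq} = \binom{2u+n-\tfrac12}{n} = \frac{2^n}{n!}\prod_{k=0}^{n-1}\Bigl(u+\tfrac{2k+1}{4}\Bigr).
\end{equation*}
As a product of linear factors in $u$ with strictly positive constant terms, this expands as $\sum_{k=0}^n d(n,k)\,u^k$ with every $d(n,k)>0$ (they are, up to the factor $2^n/n!$, the elementary symmetric functions of $\{(2j+1)/4\}_{j=0}^{n-1}$). Since $u^k=(p+\tfrac12)^k(q+\tfrac12)^k = V_{pk}V_{qk}$, setting $D=\mathrm{diag}(d(n,0),\dots,d(n,n))$ yields the decomposition $L=V\cdot D\cdot {}^tV$ claimed in part (1).

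For part (2), the matrix $P=(\binom{i}{j})_{0\le i,j\le n}$ is lower triangular with $1$'s on the diagonal, hence invertible, so $Q=P^{-1}V$ is well-defined. Upper triangularity of $Q$ amounts to: for each fixed $j$, the polynomial $(x+\tfrac12)^j$ admits a unique expansion as a linear combination $\sum_k Q_{kj}\binom{x}{k}$ in the basis $\{\binom{x}{k}\}_{k\ge 0}$ of $\QQ[x]$, and this expansion truncates at $k=j$ because the left-hand side has degree exactly $j$. Evaluating at $x=0,1,\dots,n$ then recovers the matrix identity $V=PQ$, giving $Q_{kj}=0$ for $k>j$.

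The main substantive step is the generating function collapse of the double sum into a single binomial coefficient; once that is available, everything else is essentially formal. I do not expect a genuine obstacle, but as a sanity check it is worth verifying the substitution in the smallest case: for $n=1$ one gets $L_{pq}=2u+\tfrac12$ directly, confirming that the centering $(p,q)\mapsto(p+\tfrac12,q+\tfrac12)$ is the correct one to produce positive $d(n,k)$ and thus the Vandermonde factorization needed to apply Whitney's theorem in the next step.
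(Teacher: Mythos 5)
Your proof of part (1) is correct but takes a genuinely different, and cleaner, route to the key closed form than the paper does. The paper isolates the identity
\begin{equation*}
\sum_{a=0}^n\binom{a+(p+1)(q+1)-1}{a}\binom{n-a+pq-1}{n-a}=\frac{1}{n!}\prod_{i=1}^n(2pq+p+q+i)
\end{equation*}
as a standalone proposition and proves it by a polynomial-evaluation argument: two alternating-sum lemmas on binomial coefficients are first established by induction, then a two-variable function $g(u,v)$ is introduced (with $u=(p+\tfrac12)(q+\tfrac12)+\tfrac14$, $v=\tfrac12(p+q)$), shown to agree with a constant at sufficiently many points $v=u+t$, $v=-u-t$ via those lemmas, and therefore to be constant in $v$. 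You instead recognize the left-hand side as a negative-binomial convolution, collapse $(1-t)^{-(p+1)(q+1)}\cdot(1-t)^{-pq}=(1-t)^{-(2pq+p+q+1)}$, and read off $L_{pq}=\binom{n+2pq+p+q}{n}$ as a coefficient. This replaces two lemmas and a multi-point interpolation argument with a one-line generating-function computation and is, in effect, a much shorter proof of the paper's Proposition 3.3.2. Your subsequent factorization $\binom{2u+n-\tfrac12}{n}=\tfrac{2^n}{n!}\prod_{k=0}^{n-1}(u+\tfrac{2k+1}{4})$ with $u=(p+\tfrac12)(q+\tfrac12)$ is numerically identical to the paper's (despite the differing definition of $u$) and correctly yields $L=VDV^{t}$ with all $d(n,k)>0$ as elementary symmetric functions of the positive numbers $\tfrac{2k+1}{4}$. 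For part (2) your argument — expanding $(x+\tfrac12)^j$ in the falling-factorial basis $\{\binom{x}{k}\}$ and observing degree truncation — is essentially the paper's argument in disguise: the paper phrases the vanishing of $Q_{kj}$ for $k>j$ via the Stirling-number fact $S(j,k)=0$ for $k>j$, which is exactly the statement that the $k$-th finite difference of a degree-$<k$ polynomial vanishes. Both routes are sound; yours avoids introducing Stirling numbers by name.
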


\begin{proof}
    (1) By the proposition below, there exists some constants $c_k$ for $0\le k\le n$, such that 
    $$L_{pq}=\sum\limits_{k=0}^nc_k(p+\frac{1}{2})^k(q+\frac{1}{2})^k,$$
    which implies the statement.\\
    (2) It is equivalent to show that
    $$\sum\limits_{i=0}^p(-1)^{p-i}\binom{p}{i}(i-\frac{1}{2})^k=0$$
    for any $k<p$, which follows from the fact that the Stirling number of the second kind $S(k,p)=\frac{1}{p!}\sum\limits_{i=1}^p(-1)^{p-i}\binom{p}{i}i^k=0$ if $p>k$.
\end{proof}

\begin{proposition}
    $$\sum\limits_{a=0}^n\binom{a+(p+1)(q+1)-1}{a}\binom{n-a+pq-1}{n-a}=\frac{1}{n!}\prod_{i=1}^n(2pq+p+q+i).$$
\end{proposition}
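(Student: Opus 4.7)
The identity to prove is a clean consequence of the Vandermonde--Chu convolution for negative binomial coefficients, so the plan is essentially to recognize the left-hand side and then massage the resulting single binomial into the stated product form.

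First, I would invoke the generating-function identity
\begin{equation*}
   (1-x)^{-r} = \sum_{a\geq 0} \binom{a+r-1}{a}\, x^a.
\end{equation*}
Multiplying the series for $(1-x)^{-r}$ and $(1-x)^{-s}$ and equating the coefficient of $x^n$ on both sides of $(1-x)^{-r}(1-x)^{-s} = (1-x)^{-(r+s)}$ gives the Vandermonde-type identity
\begin{equation*}
    \sum_{a=0}^{n}\binom{a+r-1}{a}\binom{n-a+s-1}{n-a} = \binom{n+r+s-1}{n}.
\end{equation*}

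Next, I would specialize to $r=(p+1)(q+1)$ and $s=pq$. Expanding,
\begin{equation*}
    r+s-1 = (p+1)(q+1)+pq-1 = 2pq+p+q,
\end{equation*}
so the left-hand side of the proposition equals $\binom{n+2pq+p+q}{n}$.

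Finally, rewriting this single binomial coefficient as a rising factorial yields
\begin{equation*}
    \binom{n+2pq+p+q}{n} = \frac{(n+2pq+p+q)!}{n!\,(2pq+p+q)!} = \frac{1}{n!}\prod_{i=1}^{n}(2pq+p+q+i),
\end{equation*}
which is exactly the right-hand side. There is no real obstacle here: the only content is choosing the right specialization of the negative-binomial Vandermonde convolution and checking that $(p+1)(q+1)+pq-1$ simplifies to $2pq+p+q$. I would simply cite (or give a one-line generating-function proof of) the Vandermonde-type identity and present the two arithmetic simplifications above.
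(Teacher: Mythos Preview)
Your argument is correct. The negative-binomial Vandermonde convolution
\[
\sum_{a=0}^{n}\binom{a+r-1}{a}\binom{n-a+s-1}{n-a}=\binom{n+r+s-1}{n}
\]
holds as a formal power series identity for all $r,s$ (in particular for $s=pq=0$, where $(1-x)^{0}=1$ forces only the $a=n$ term to survive), and the specialization $r=(p+1)(q+1)$, $s=pq$ together with the arithmetic $(p+1)(q+1)+pq-1=2pq+p+q$ immediately gives the stated product.

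Your route is genuinely different from, and considerably shorter than, the paper's. The paper does not invoke Vandermonde at all: it changes variables to $u=(p+\tfrac12)(q+\tfrac12)+\tfrac14$ and $v=\tfrac12(p+q)$, rewrites the sum as a polynomial $g(u,v)$, and then proves two auxiliary lemmas in order to evaluate $g(u,u+t)$ and $g(u,-u-t)$ for $t=0,1,\dots,n-1$. This pins down enough values of the degree-$\le n$ polynomial $v\mapsto g(u,v)$ to force it to be constant in $v$, equal to $\frac{1}{n!}\prod_{k=1}^n(2u+k-1)$. What the paper's argument buys is that the dependence on $(p+\tfrac12)(q+\tfrac12)$ alone is made visible from the outset, which is exactly what is needed for the decomposition $L=V\cdot D\cdot{}^tV$ in the preceding proposition; your proof yields the same closed product $\frac{1}{n!}\prod_{i}(2pq+p+q+i)$, from which that dependence can of course be read off just as well. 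In short, you replace a two-lemma interpolation argument by a one-line generating-function identity, with no loss of information for the downstream application.
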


We need the following lemmas to prove the above proposition.
\begin{lemma}
    For any $n,k\in \mathbb{Z}_{>0}$, 
    $$\sum\limits_{i=0}^{k}(-1)^i\binom{n}{i}\binom{n+k-1-i}{k-i}=0.$$
\end{lemma}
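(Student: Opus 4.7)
The plan is to recognize the alternating sum as a coefficient extraction in a trivial product of generating functions. I would start from the two classical identities
\begin{equation*}
  (1-x)^{n}=\sum_{i\ge 0}(-1)^{i}\binom{n}{i}x^{i},\qquad (1-x)^{-n}=\sum_{j\ge 0}\binom{n+j-1}{j}x^{j},
\end{equation*}
the first of which terminates at $i=n$. Convolving these two expansions, the coefficient of $x^{k}$ in $(1-x)^{n}\cdot(1-x)^{-n}$ is exactly
\begin{equation*}
  \sum_{i=0}^{k}(-1)^{i}\binom{n}{i}\binom{n+k-1-i}{k-i}.
\end{equation*}
But $(1-x)^{n}(1-x)^{-n}=1$, so this coefficient is $0$ for every $k\ge 1$, which is the claim.

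An equivalent, purely algebraic route is to apply the upper negation identity $\binom{n+k-1-i}{k-i}=(-1)^{k-i}\binom{-n}{k-i}$. The sum then becomes $(-1)^{k}\sum_{i=0}^{k}\binom{n}{i}\binom{-n}{k-i}$, which by Vandermonde's convolution collapses to $(-1)^{k}\binom{0}{k}$; this vanishes for $k\ge 1$. This variant has the advantage that it avoids any formal manipulation of power series at the cost of invoking Vandermonde at a negative upper index, which is legitimate because the convolution is an identity of polynomials in the upper arguments.

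There is essentially no obstacle here: both routes are two lines. The only thing worth being mindful of is bookkeeping of the summation bounds (the series $(1-x)^n$ terminates at $i=n$, so the identity is written with the finite sum up to $k$ but really uses the convention $\binom{n}{i}=0$ for $i>n$, which is harmless). The hypothesis $n,k\in\ZZ_{>0}$ is used only to ensure $k\ge 1$ so that $[x^k](1)=0$, i.e., to exclude the trivial case $k=0$ where the sum equals $1$.
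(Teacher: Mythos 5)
Your proof is correct, and both variants (the formal-power-series convolution and the Vandermonde/upper-negation rewrite) are valid and complete; the only hypothesis actually used is $k\ge 1$, exactly as you note. The paper instead proves the identity by a double induction on $(n,k)$, deducing the case $(n,k)$ from $(n-1,k-1)$, $(n,k-1)$, and $(n-1,k)$ via Pascal-type recurrences applied to both binomial factors. The two routes are genuinely different: the paper's induction is elementary and self-contained but leaves the underlying structure implicit (one has to reconstruct the recurrence from scratch), whereas your generating-function argument identifies the sum immediately as $[x^k]\bigl((1-x)^n(1-x)^{-n}\bigr)=[x^k](1)$, which is shorter, more conceptual, and makes it transparent why the hypothesis $n>0$ is not even needed (the identity holds for any $n$, with only $k\ge1$ required). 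In a paper that goes on to use related binomial manipulations, either choice is reasonable; yours arguably explains the result rather than merely verifying it.
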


\begin{proof}
    By induction on $n$ and $k$. More precisely, we can deduce the case of $(n,k)$ by the cases of $(n-1,k-1)$, $(n,k-1)$ and $(n-1,k)$.
\end{proof}

\begin{lemma}\label{lemma 3.3.4}
    For any $n,k\in \mathbb{Z}_{>0}$,$n\ge k$, $x\in \mathbb{C}$,
    $$\sum\limits_{i=0}^{k}(-1)^i i! \binom{n}{i}\binom{k}{i}(\prod_{j=0}^{n-1-i}(x+k+j))=\prod_{j=0}^{n-1}(x+j).$$
\end{lemma}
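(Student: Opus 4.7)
The plan is to prove the lemma by induction on $k$, with $n\geq k$ throughout. Write $S_{n,k}(x)$ for the left-hand side, so the claim becomes $S_{n,k}(x)=\prod_{j=0}^{n-1}(x+j)$. The base case $k=0$ is immediate: only the $i=0$ term survives and yields exactly the desired product.

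For the inductive step, I would apply Pascal's identity $\binom{k}{i}=\binom{k-1}{i}+\binom{k-1}{i-1}$ to split $S_{n,k}(x)$ into two sums $A$ and $B$. Using $\binom{k-1}{k}=0$, the first sum $A$ has effective range $0\leq i\leq k-1$; and since $\prod_{j=0}^{n-1-i}(x+k+j)=\prod_{j=0}^{n-1-i}((x+1)+(k-1)+j)$, one recognizes $A=S_{n,k-1}(x+1)$ on the nose. For the second sum $B$, one shifts $i\mapsto i+1$ and invokes the elementary identity $(i+1)!\binom{n}{i+1}=n\cdot i!\binom{n-1}{i}$; the remaining product $\prod_{j=0}^{(n-1)-1-i}(x+k+j)$ is precisely the shape appearing in $S_{n-1,k-1}(x+1)$, so $B=-n\cdot S_{n-1,k-1}(x+1)$. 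Since $n\geq k$ ensures both $(n,k-1)$ and $(n-1,k-1)$ fall within the scope of the inductive hypothesis, I obtain
\begin{equation*}
    S_{n,k}(x)=(x+1)(x+2)\cdots(x+n)-n(x+1)(x+2)\cdots(x+n-1).
\end{equation*}

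A single factoring completes the proof:
\begin{equation*}
    (x+1)(x+2)\cdots(x+n-1)\bigl[(x+n)-n\bigr]=x(x+1)\cdots(x+n-1)=\prod_{j=0}^{n-1}(x+j).
\end{equation*}
The only nontrivial point — and thus the main obstacle — is verifying that the Pascal split collapses cleanly into $S_{n,k-1}(x+1)-n\cdot S_{n-1,k-1}(x+1)$; the elementary identity $(i+1)!\binom{n}{i+1}=n\cdot i!\binom{n-1}{i}$ does all the real work, together with the trivial reshuffle $x+k=(x+1)+(k-1)$ that realigns the rising factorials so the inductive hypothesis applies. The assumption $n\geq k$ enters in exactly one place: it guarantees $n-1\geq k-1$, so that the inductive hypothesis remains valid on the smaller instance $S_{n-1,k-1}$.
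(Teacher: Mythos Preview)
Your induction on $k$ is correct. The Pascal split $\binom{k}{i}=\binom{k-1}{i}+\binom{k-1}{i-1}$ together with the reindexing $x+k=(x+1)+(k-1)$ and the factorial identity $(i+1)!\binom{n}{i+1}=n\cdot i!\binom{n-1}{i}$ indeed yield
\[
S_{n,k}(x)=S_{n,k-1}(x+1)-n\,S_{n-1,k-1}(x+1),
\]
and the final factoring is clean. One cosmetic remark: the lemma as stated has $k\in\mathbb{Z}_{>0}$, so your base case $k=0$ is formally outside the statement; but the identity is trivially true at $k=0$ (for all $n\ge 0$, including $n=0$, which you implicitly need when $n=k=1$), so this causes no trouble.

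Your route is genuinely different from the paper's. The paper does \emph{not} induct on $k$; instead it observes that both sides are monic of degree $n$ in $x$, that every summand on the left is divisible by $(x+k)(x+k+1)\cdots(x+n-1)$ (so the $n-k$ roots $x=-k,\dots,-(n-1)$ are automatic), and then reduces the problem to checking that the left side also vanishes at $x=0,-1,\dots,-(k-1)$. That last check is exactly the content of the preceding combinatorial lemma $\sum_{i=0}^{k}(-1)^i\binom{n}{i}\binom{n+k-1-i}{k-i}=0$. So the paper's argument is a ``match the roots and leading coefficients'' polynomial argument resting on an auxiliary identity, whereas yours is a self-contained recursion that never invokes that lemma. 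Your approach has the advantage of being entirely elementary and independent of Lemma~3.3.3; the paper's approach has the advantage of explaining \emph{why} the identity should hold (both sides are determined by their roots) and of reusing a lemma already in hand.
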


\begin{proof}
    It is equivalent to prove that $x=0,-1,\dots, -k+1$ are the roots of the left-hand side of the equation. This follows from the previous lemma.
\end{proof}
\

\begin{proof}[Proof of Proposition 3.3.2.]
Let $u=(p+\frac{1}{2})(q+\frac{1}{2})+\frac{1}{4}$, $v=\frac{1}{2}(p+q)$, then
\begin{align*}
    &\sum\limits_{a=0}^n\binom{a+(p+1)(q+1)-1}{a}\binom{n-a+pq-1}{n-a}\\ =&\frac{1}{n!}\sum\limits_{a=0}^n\binom{n}{a}(\prod_{k=1}^a(u+v+k-1))(\prod_{l=1}^{n-a}(u-v+l-1)).
\end{align*}
Define the right-hand side of the above equation as $g(u,v)$. Suppose
$$g(u,v)=\sum\limits_{i=0}^ng_i(u)v^i.$$
We have the equation
$$g(u,u)=\frac{1}{n!}\prod_{k=1}^n(2u+k-1).$$
For any $t\in\mathbb{Z}$ and $0\le t\le n-1$, using Lemma \ref{lemma 3.3.4} we get
\begin{align*}
        g(u,u+t)=\frac{1}{n!}&\sum\limits_{n-a=0}^{t+1}\binom{n}{a}(\prod_{k=1}^a(2u+k-1+t))(\prod_{l=1}^{n-a}(l-1-t))\\
        &=\frac{1}{n!}\prod_{k=1}^n(2u+k-1).
    \end{align*}
\begin{align*}
        g(u,-u-t)=\frac{1}{n!}&\sum\limits_{a=0}^{t+1}\binom{n}{a}(\prod_{k=1}^a(k-1-t))(\prod_{l=1}^{n-a}(2u+l-1+t))\\
        &=\frac{1}{n!}\prod_{k=1}^n(2u+k-1).
    \end{align*}
Fix $u\in \mathbb{C}$, define $h_u(v)=g(u,v)$, the degree of $h$ is not greater than $n$. However, we have
$$h_u(u)=h_u(u+i)=h_u(-u-i)$$
for any $i\in\mathbb{Z}$, $0\le i\le n-1$. Therefore, for $u\ne -n+1,-n+2\dots,n-1$, $h_u$ is a constant.
$$g(u,v)=h_u(v)=h_u(u)=\frac{1}{n!}\prod_{k=1}^n(2u+k-1).$$
Since $g$ is a continuous function, we get
$$g(u,v)=\frac{1}{n!}\prod_{k=1}^n(2u+k-1).$$
for all $u,v\in \mathbb{C}$.
Using $u=(p+\frac{1}{2})(q+\frac{1}{2})+\frac{1}{4}$, $v=\frac{1}{2}(p+q)$, we get the desired formula in the proposition.
\end{proof}

\iffalse

\begin{lemma}
Let $n,k\in\mathbb{Z}_{>0}$, then
    $$\sum\limits_{i=0}^k(-1)^i\binom{n}{i}\binom{n+k-1-i}{k-i}=0.$$
\end{lemma}
\begin{proof}
    By induction on n,k. More precisely, we can deduce the case of $(n,k)$ by the cases of $(n-1,k-1)$, $(n,k-1)$ and $(n-1,k)$.
\end{proof}

\begin{lemma}
    For any $x\in\mathbb{C}$ and $k,t\in \mathbb{Z}_{>0}$ which satisfy $t\ge k$,
    $$\prod\limits_{i=0}^{t-1}(x+i)=\sum\limits_{i=0}^k(-1)^ii!\binom{t}{i}\binom{k}{i}(\prod\limits_{j=k}^{t+k-1-i}(x+j)).$$
\end{lemma}

\begin{proof}
    It is equivalent to prove that $x=0,-1,\dots, -k+1$ are the roots of the right hand side of the equation. This follows from lemma 2.0.16.
\end{proof}

Sketch of proof: \\
(1) Use the equation $$\sum\limits_{a=0}^n\binom{a+(p+1)(q+1)-1}{a}\binom{n-a+pq-1}{n-a}=\frac{1}{n!}\prod_{i=1}^n(2pq+p+q+i).$$
(2) Use the fact that $S(k,p):=\frac{1}{p!}\sum\limits_{k=0}^p(-1)^{p-i}i^k=0$ for $p>k$.\\
(3) Use \ref{lemma 3.3}.
\fi

\begin{proof}[Proof of Theorem\ref{main theorem}]
    We have 
    \begin{equation*}
        L=V\cdot D\cdot{}^tV = P\cdot M\cdot{}^tP,
    \end{equation*}
    thus
    \begin{equation*}
        M=Q\cdot D\cdot{}^tQ,
    \end{equation*}
    which gives the Gauss decomposition of $M$. Note that the Vandermonde matrix $V$ is totally positive and 
    \begin{equation*}
        V=P\cdot Q 
    \end{equation*}
    gives the (opposite) Gauss decomposition of $V$, we conclude by Whitney's theorem\cite{c}.
\end{proof}

\begin{remark}
    The above Gauss decomposition of $M$ is similar to \cite[Proposition A.6.]{ks}. In fact, for types $A$ and $B$, the Gauss decomposition of the metamatrix $M(W)$ has a uniform description 
    \begin{equation*}
        M(W)=\frac{1}{|W|}{}^tQ(W)\cdot D(W)\cdot Q(W).
    \end{equation*}
    We learned the following combinatorial interpretation from Tao Gui.
    The entries in $M(W)$ count the two-sided face numbers of T. K. Peterson's two-sided analog of the Coxeter complex \cite{P}, each column of the upper triangular matrix $Q(W)$ gives the f-vector of the corresponding $W$-permutohedron of lower ranks of the same type with $W$, the entries of the diagonal matrix $D(W)$ compute the betti numbers of the complement of the complexified Coxeter arrangement of type $W$. It is interesting to know whether there is a conceptional explanation and a type-uniform proof of the above decomposition and whether it can be generalized to other types.
\end{remark}

\section{Exceptional types}

\subsection{Strategy for exceptional types}

The following proposition is a classical result.
\begin{proposition}
    Let $(W,S)$ be a finite Coxeter group, $I,J$ be two subsets of $S$. Let $l:W\rightarrow \ZZ$ be the length function on $W$. There is a canonical bijection
    \begin{equation*}
        W_I\backslash W/ W_J \longrightarrow {}^I W ^J,
    \end{equation*} 
    where     
    \begin{equation*}
        {}^I W ^J := \{w\in W: I\subset L(w),J\subset R(w)\},
    \end{equation*}
    where 
    \begin{align*}
        L(w):=\{i\in S: l(s_iw)>l(w)\},\\
        R(w):=\{i\in S: l(ws_i)>l(w)\}.
    \end{align*}
\end{proposition}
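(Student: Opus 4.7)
The plan is to identify ${}^IW^J$ with the set of minimum-length representatives of the double cosets $W_I\backslash W/W_J$; the bijection sends $w\in{}^IW^J$ to its double coset $W_IwW_J$, and the inverse sends a double coset to its unique minimum-length element. What needs to be shown is therefore (i) each double coset contains a unique minimum-length element, and (ii) this element is characterized by $I\subset L(w)$ and $J\subset R(w)$. The approach is the classical one via the strong exchange condition of the Coxeter system $(W,S)$.

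First I would handle the one-sided version: for $I\subset S$, each left coset $W_Iw$ has a unique minimum-length element $w_0$, characterized by $I\subset L(w_0)$, and satisfying the additive length formula $l(uw_0)=l(u)+l(w_0)$ for every $u\in W_I$. Existence of a minimum is immediate; a parity argument yields $I\subset L(w_0)$, since otherwise $s_iw_0\in W_Iw$ would be strictly shorter. The length formula is proved by induction on $l(u)$: given a reduced expression $u=s_{j_1}\cdots s_{j_k}$ with $s_{j_a}\in I$, it suffices to show that the concatenation $s_{j_1}\cdots s_{j_k}\cdot w_0$ (with any reduced expression for $w_0$) is reduced. If it is not, the strong exchange condition applied to $s_{j_1}$ and the inductively-reduced expression $s_{j_2}\cdots s_{j_k}\cdot w_0$ produces a single-letter deletion. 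A deletion from the $s_{j_2}\cdots s_{j_k}$-part contradicts the reducedness of $s_{j_1}\cdots s_{j_k}$, while a deletion from the $w_0$-part yields a reflection $t=u'^{-1}s_{j_1}u'\in W_I$ (with $u'=s_{j_2}\cdots s_{j_k}$) and an element $w_0'=tw_0\in W_Iw$ with $l(w_0')<l(w_0)$, contradicting the minimality of $w_0$. Uniqueness of $w_0$ then follows from the length formula, and the right-handed analog is obtained by symmetric reasoning.

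For the double-coset statement, given $w_0$ with $I\subset L(w_0)$ and $J\subset R(w_0)$ and any element $uw_0u'\in W_Iw_0W_J$, the left absorption lemma gives $l(uw_0)=l(u)+l(w_0)$; a parallel exchange argument on the right, using that any $s_j\in J$ with $l(uw_0s_j)\le l(uw_0)$ would by deletion analysis on the expression $u\cdot w_0\cdot s_j$ produce either a non-reduced factor or an element of $w_0W_J$ shorter than $w_0$, gives the full additive formula $l(uw_0u')=l(u)+l(w_0)+l(u')$. Hence $w_0$ is the strict minimum of $W_Iw_0W_J$, which gives both the uniqueness in (i) and the ``$\Leftarrow$'' direction of (ii); the converse direction of (ii) is again the parity argument applied on both sides. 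The main obstacle throughout is the case analysis in the one-sided absorption lemma, specifically the observation that every $w_0$-side deletion produces a shorter representative of the coset and is therefore excluded by minimality — this is where the hypothesis $I\subset L(w_0)$ is decisively used; once it is in hand, the rest of the argument is assembly.
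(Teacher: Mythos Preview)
The paper does not actually prove this proposition; it is simply recorded as ``a classical result''. Your outline via minimal-length double-coset representatives is the standard one, and the one-sided absorption lemma is argued correctly.

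There is, however, a genuine gap in the two-sided step. You assert the \emph{universal} additive formula $l(uw_0u')=l(u)+l(w_0)+l(u')$ for all $u\in W_I$, $u'\in W_J$, and this is false. Take $W=S_3$, $I=J=\{s_1\}$, $w_0=e$ (which lies in ${}^IW^J$), and $u=u'=s_1$: then $uw_0u'=e$ has length $0$, not $2$. The flaw is in your deletion analysis of the word $u\cdot w_0\cdot s_j$ when $l(uw_0s_j)<l(uw_0)$: the exchanged letter may land in the $u$-block, yielding $uw_0s_j=u''w_0$ with $u''\in W_I$ and $l(u'')=l(u)-1$. This is neither a ``non-reduced factor'' nor a shorter element of $w_0W_J$; it is exactly the case $w_0s_jw_0^{-1}\in W_I$, and no contradiction is produced. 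Your dichotomy is therefore not exhaustive, and the claim that $w_0$ is the strict minimum of its double coset does not follow from the argument as written.

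The standard repair is to prove instead the weaker statement that every $w\in W_Iw_0W_J$ admits \emph{some} factorisation $w=aw_0b$ with $a\in W_I$, $b\in W_J$ and $l(w)=l(a)+l(w_0)+l(b)$; one inducts on $l(u)+l(u')$ over arbitrary factorisations $w=uw_0u'$, and in the troublesome case rewrites $uw_0s_j=u''w_0$ to pass to a strictly shorter pair. Alternatively one invokes the lifting property of Bruhat order to show that any minimal-length element $d$ of the double coset satisfies $d\le w$ for every $w$ in that coset, whence uniqueness. Both arguments are in \cite{bb} and \cite{h}.
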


By this proposition, we have the following corollary, 
\begin{corollary}
    \begin{equation*}
        M_{pq} = \sum_{|I|=p,|J|=q} |{}^I W^J| = \sum_{i,j} \binom{i}{p}\binom{j}{q}N_{ij},
    \end{equation*}    
    where 
    \begin{equation*}
        N_{ij} := \#\{ w\in W: \#L(w)=i,\#R(w)=j\}.
    \end{equation*}
\end{corollary}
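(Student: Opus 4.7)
The plan is to split the corollary into its two equalities and prove each by a short counting argument. The first equality $M_{pq} = \sum_{|I|=p,|J|=q} |{}^I W^J|$ is immediate: substitute the bijection $W_I\backslash W/W_J \xrightarrow{\sim} {}^I W^J$ from the preceding proposition into the definition of $M_{pq}$.

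For the second equality, I would rewrite $|{}^I W^J|$ using the defining characterization $I \subset L(w)$, $J \subset R(w)$, and then swap the order of summation between $w \in W$ and the pair $(I,J)$. Concretely,
\begin{equation*}
    \sum_{|I|=p,\,|J|=q} |{}^I W^J| = \sum_{|I|=p,\,|J|=q} \#\{w \in W : I \subset L(w),\, J \subset R(w)\} = \sum_{w \in W} \#\bigl\{(I,J) : |I|=p,\, |J|=q,\, I \subset L(w),\, J \subset R(w)\bigr\}.
\end{equation*}
For a fixed $w$, the choices of $I$ and $J$ are independent, and choosing a $p$-element subset of the $|L(w)|$-element set $L(w)$ contributes $\binom{|L(w)|}{p}$ possibilities, with an analogous count for $J$. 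Hence the inner count equals $\binom{|L(w)|}{p}\binom{|R(w)|}{q}$, and grouping $w$'s by the pair $(|L(w)|,|R(w)|)$ yields
\begin{equation*}
    \sum_{w \in W} \binom{|L(w)|}{p}\binom{|R(w)|}{q} = \sum_{i,j} \binom{i}{p}\binom{j}{q}\, N_{ij},
\end{equation*}
which is the claimed expression.

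There is no real obstacle here; the statement is a routine double-counting identity once the proposition is invoked, and the only substance is correctly interchanging the two summations. The point of isolating this corollary is presumably computational: the numbers $N_{ij}$ depend only on $W$ and can be tabulated for each exceptional Coxeter group (for example from a \textsc{GAP}/Chevie enumeration of descent pairs), after which $M(W)$ is obtained by the binomial transform above and total positivity can then be checked directly by evaluating all minors.
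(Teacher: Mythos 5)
Your argument is correct and is precisely the intended one: the paper states the corollary without proof as an immediate consequence of the bijection $W_I\backslash W/W_J \cong {}^I W^J$, and the routine exchange of summations with the binomial count of subsets $I\subset L(w)$, $J\subset R(w)$ is exactly what is left implicit. Your closing remark about the computational purpose of the reformulation also matches the paper's use of $N_{ij}$ in the exceptional-type verification.
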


It is not difficult for computer to go through elements of $W$, give the matrix $N(W)=(N_{ij})$ and calculate $M(W)$ by $N(W)$.

\begin{remark}
    The entries $N_{ij}$ in the matrix $N(W)$ are the two-sided $W$-Eulerian numbers defined by T. K. Peterson in \cite{P}. There are two kinds of interesting symmetries in these numbers (see \cite[Observation 15]{P}): 
    \begin{equation} \label{sym}
        N_{ij}=N_{ji} \text{ and } N_{ij}=N_{n-i,n-j},
    \end{equation}
    where $n$ is the rank of the Coxeter system $(W,S)$. There is an interesting conjecture by Tao Gui (unpublished) saying that there should be a smooth projective variety $X(W)$ whose Hodge numbers are given by those $N_{ij}$'s. That is, after a rotation of 45 degrees, the matrix $N(W)$ gives the Hodge diamond of $X(W)$. If this conjecture is true, then the first kind of symmetry in \eqref{sym} comes from the Hodge symmetry whereas the second kind of symmetry in \eqref{sym} comes from the Serre duality. Note that T. K. Peterson's generalized Gessel's conjecture \cite[Conjecture 16]{P} asserts that the generating polynomials of the $W$-Eulerian numbers $N_{ij}$ are $\gamma$-positive, which easily implies that each diagonal of the matrix $N(W)$ is unimodal, a numerical shadow of the hard Lefschetz theorem. As far as we know, Gui's conjecture is currently unsolved even for type $A$.
\end{remark}

% list code in appendix

\subsection{Results}
Contingency metamatrices for exceptional types are all totally positive and they are listed below.
\begin{center}
 Contingency Metamatrix 
\end{center}

$I_2(m),m\ge 2$  
\begin{equation*}
    \begin{psmallmatrix}
    2m & 2m & 1\\
    2m & 2m+2 & 2\\
    1 & 2 & 1
    \end{psmallmatrix}
\end{equation*}

$H_3$ 
\begin{equation*}
    \begin{psmallmatrix}
    120 & 180 & 62 & 1\\
    180 & 288 & 111 & 3\\
    62 & 111 & 52 & 3\\
    1 & 3 & 3 & 1
    \end{psmallmatrix}
\end{equation*}

$H_4$ 
\begin{equation*}
    \begin{psmallmatrix}
    14400 & 28800 & 17040 & 2640 & 1\\
    28800 & 58560 & 35520 & 5764 & 4\\
    17040 & 35520 & 22366 & 3892 & 6\\
    2640 & 5764 & 3892 & 772 & 4\\
    1 & 4 & 6 & 4 & 1
    \end{psmallmatrix}
\end{equation*}

$F_4$ 
\begin{equation*}
    \begin{psmallmatrix}
    1152 & 2304 & 1392 & 240 & 1\\
    2304 & 4800 & 3072 & 580 & 4\\
    1392 & 3072 & 2134 & 460 & 6\\
    240 & 580 & 460 & 124 & 4\\
    1 & 4 & 6 & 4 & 1
    \end{psmallmatrix}
\end{equation*}

$E_6$ 
\begin{equation*}
    \begin{psmallmatrix}
    51840 & 155520 & 172800 & 86400 & 18558 & 1278 & 1\\
    155520 & 497520 & 550800 & 287100 & 65124 & 4830 & 6\\
    172800 & 550800 & 658800 & 361350 & 87680 & 7145 & 15\\ 
    86400 & 287100 & 361350 & 211450 & 55945 & 5165 & 20\\
    18558 & 65124 & 87680 & 55945 & 16650 & 1834 & 15\\
    1278 & 4830 & 7145 & 5165 & 1834 & 268 & 6\\
    1 & 6 & 15 & 20 & 15 & 6 & 1
    \end{psmallmatrix}    
\end{equation*}

$E_7$
\begin{equation*}
    \begin{psmallmatrix}
        2903040 & 10160640 & 13789440 & 9072000 & 2938320 & 415800 & 17642 & 1\\
        10160640 & 36126720 & 49956480 & 33626880 & 11211480 & 1648920 & 73927 & 7\\
        13789440 & 49956480 & 70640640 & 48867840 & 16868580 & 2598930 & 124611 & 21\\
        9072000 & 33626880 & 48867840 & 34960080 & 12595710 & 2055820 & 107265 & 35\\
        2938320 & 11211480 & 16868580 & 12595710 & 4794276 & 843134 & 49183 & 35\\
        415800 & 1648920 & 2598930 & 2055820 & 843134 & 164334 & 11231 & 21\\
        17642 & 73927 & 124611 & 107265 & 49183 & 11231 & 994 & 7\\
        1 & 7 & 21 & 35 & 35 & 21 & 7 & 1\\
    \end{psmallmatrix}
\end{equation*}

$E_8$
\begin{equation*}
    \begin{psmallmatrix}
        696729600 & 2786918400 & 4470681600 & 3657830400 & 1601268480  & 357557760 & 34508640 & 881760 & 1\\
        2786918400 & 11240570880 & 18207866880 &  15071616000 & 6692958720 & 1522152576 & 150602304 & 4006856 & 8\\
        4470681600 & 18207866880 & 29831639040 & 25032430080 & 11304830880 & 2627041536 & 267654842 & 7467894 & 28\\
        3657830400 & 15071616000 & 25032430080 & 21351747648 & 9839303040 & 2346581468 & 247700376 & 7318836 & 56\\
        1601268480 & 6692958720 & 11304830880 & 9839303040 & 4648819998 & 1144964066 & 126314765 & 4008367 & 70\\
        357557760 & 1522152576 & 2627041536 & 2346581468 & 1144964066 & 293984848 & 34351972 & 1196498 & 56\\
        34508640 & 150602304 & 267654842 & 247700376 & 126314765 & 34351972 & 4349062 & 172685 & 28\\
        881760 & 4006856 & 7467894 & 7318836 & 4008367 & 1196498 & 172685 & 8524 & 8\\
        1 & 8 & 28 & 56 & 70 & 56 & 28 & 8 & 1\\
    \end{psmallmatrix}
\end{equation*}

\

\subsection*{Acknowledgments}
This is a project work during the 2024 ``Algebra and Number
Theory” summer school. We extend our appreciation to the organizers Academy of Mathematics and Systems Science, Chinese Academy of Sciences, and Peking University. We also
thank Michael Finkelberg for bringing us to the conjecture of Mikhail Kapranov and Vadim Schechtman. Furthermore, we are grateful to our mentor Ruotao Yang for the comprehensive assistance rendered
during the completion of this project. Lastly, we would like to express our thanks to Tao Gui 
for the inspiring combinatorial background and comments for the writing up, and Heng Yang for the help in programming.

\end{document}